\newtheorem{theorem}{Theorem}[section]
\newtheorem{lemma}[theorem]{Lemma}
\theoremstyle{definition}
\newtheorem{definition}[theorem]{Definition}
\numberwithin{equation}{section}
\begin{document}

% define the title

\title{Difficulties of the set of natural numbers}

\author{Qiu Kui Zhang\\
%School of Electronic and Information Engineering,\\
Nanjing University of Information Science and Technology\\
210044 Nanjing, China\\
E-mail: zhangqk@nuist.edu.cn}

\date{}
% generates the title
\maketitle

%% Classification and key words; note that the 2010 classification is used:

\renewcommand{\thefootnote}{}

\footnote{2010 \emph{Mathematics Subject Classification}: Primary 03E30; Secondary 03E35.}

\footnote{\emph{Key words and phrases}: natural number, proper class, transfinite recursion, axiom of infinity, axiom of regularity.}

\footnote{
\begin{flushright}
Manuscript submitted to arXiv
\end{flushright}
}

\renewcommand{\thefootnote}{\arabic{footnote}}
\setcounter{footnote}{0}

\begin{abstract}
In this article some difficulties are deduced from the set of natural numbers. By using the method of transfinite recursion we define an iterative process which is designed to deduct all the non-greatest elements of the set of natural numbers. But unexpectedly we meet some difficulties in answering the question of whether the iterative process can deduct all the elements of the set of natural numbers. The demonstrated difficulties suggest that if we regard the class of natural numbers as a set we will be confronted with either a contradiction or a conflict with the axiom of regularity. As a result, we have the conclusion that the class of natural numbers is not a set but a proper class.
\end{abstract}

% insert the table of contents
%\tableofcontents

\section*{Introduction}
That all the natural numbers can be pooled together to form an infinite set is a fundamental hypothesis in mathematics and philosophy, which now is widely accepted by mathematicians and scientists from various disciplines. With this hypothesis mathematicians had systematically developed a theory of infinity, namely, set theory which had become the foundation of modern mathematics and science ever since. Although once this hypothesis was a controversial issue between different schools of mathematics and philosophy and some intuitionists object to it on the grounds that a collection of objects produced by an infinite process should not be treated as a completed entity \cite{A}, they do not provide further evidence to prove that it will cause logical contradiction. And no contradiction resulting from this hypothesis had ever been reported. Today the debate has subsided and most scientists do not doubt about the validity of this hypothesis. However, in our recent study we have found some logical contradictions resulting from this hypothesis, which suggest if the axiom of infinity holds it either leads to a contradiction or a conflict with the axiom of regularity. So set theory is not as consistent as we had thought before. We anticipate our study to be a starting point for the establishment of a more sophisticated foundation theory to prevent mathematics and thus other sciences from contradiction.

\section{The definition of natural numbers in set theory}
In order to define natural numbers and study the set of natural numbers within the framework of set theory it is necessary to define a successor relation first \cite{A}.
\begin{definition}
The successor of a set $x$ is the set $x^+=x \cup \{x\}$.
\end{definition}
The notation `+' in above definition represents the successor operator which can be applied to any set to obtain its successor.

In set theory the first natural number 0 is defined with the empty set $\phi$, then number 1 with the successor of 0 and so on. To make the expression more intuitively we usually use the more suggestive notation $n+1$ for $n^+$ when $n$ is a number. So we have following inductive definition of natural numbers \cite{A}
\begin{definition}
\label{def0}
The definition of natural numbers
\begin{enumerate}
\item \label{item1} $0=\phi$ is a natural number.
\item \label{item2} If $n$ is a natural number, its successor $n+1$ is also a natural number.
\item \label{item3} All natural numbers are obtained by application of \ref{item1} and \ref{item2}.
\end{enumerate}
\end{definition}

The first rule of above definition is the basis of the induction which defines the initial natural number 0, and the second rule is the inductive step which can be repeatedly applied to obtain other natural numbers. The third rule is the restriction clause. So we can assign each natural number a certain value of a particular set
\begin{equation*}
0=\phi,1=\phi^+,2=\phi^{++}, \cdots
\end{equation*}

Whether all the natural numbers can be pooled together to form a completed infinite entity i.e. a set is a critical issue in mathematics and philosophy. Around it two opposite concepts of infinity have been developed, which are potential infinity and actual infinity. The former regards the infinite series 0, 1, 2, ... is potentially endless and the process of adding more and more numbers cannot be exhausted in principle, so it never can make a definite entity. The latter is based one the hypothesis that all natural numbers can form an actual, completed totality, namely, a set. That means the static set has already been completed and contained all natural numbers. Set theory is based on the notion of actual infinity that is clearly manifested in the axiom of infinity which postulates the existence of an inductive set and thus guarantees the existence of the set of natural numbers.

\section{Difficulties of the set of natural numbers}
In set theory the axiom of infinity which postulates the existence of an inductive set guarantees the existence of the set of natural numbers.\\
\textbf{The Axiom of Infinity}. An inductive set exists \cite{A}.

Because $\mathbb{N}$, the set of natural numbers, is the smallest inductive set, it is easy to prove its existence based on the axiom of infinity. Let $C$ be an existing inductive set; then we justify the existence of $\mathbb{N}$ on the basis of the axiom of comprehension \cite{A}
\begin{equation*}
\mathbb{N}=\{x \in C|x\in I~\mbox{for every inductive set} ~ I\}.
\end{equation*}
That implies if $C$ exists then $\mathbb{N}$ exists.

Usually set $\mathbb{N}$ can be expressed as an infinite list of natural numbers such as
\begin{equation}
\label{eq1}
\mathbb{N}=\{0,1,2,\cdots\}.
\end{equation}
or briefly as
\begin{equation}
\label{eq2}
\mathbb{N}=\{x|n(x)\}.
\end{equation}
where $n(x)$ is the predicate that $x$ is a natural number. However, this form of expression obviously uses the comprehension principle, which is thought to be the source of paradoxes in Cantor's naive set theory. Whether the using of comprehension principle here will result in contradiction is an interesting issue to us. And it is indeed the case, for we have found sufficient evidence to prove that the notion of the set of natural numbers is illogical and will lead to logical contradiction. Here we show our findings of a sequence of conflicts based on the question whether there is the greatest element in set $\mathbb{N}$. First let's consider a special collection $S$ of all $x\in \mathbb{N}$ with the property $P(x)$
\begin{equation}
\label{eq3}
S=\{x\in \mathbb{N}|P(x)\}.
\end{equation}
where the property $P(x)$ is $\forall y\in \mathbb{N}(x\ge y)$ which means $x$ is greater than or equal to all the elements of $\mathbb{N}$. Here the relation $x\ge y$ can be interpreted as the set relation $y\in x \vee y=x$ when $x$ and $y$ are ordinal numbers. According to the axiom schema of comprehension \cite{A}, if $\mathbb{N}$ is a set, $S$ is a definite set. Obviously, if $S$ is an empty set the greatest element of $\mathbb{N}$ does not exist; if $S$ is not an empty set it must contain the greatest element of $\mathbb{N}$ and thus the greatest element of $\mathbb{N}$ does exist. According to the law of excluded middle, for all $x$ of $\mathbb{N}$, $x$ either has or does not have the property $P(x)$, so intuitively we have following method to obtain set $S$. That is we can deduct all $\mathbb{N}$'s elements without the property $P$ from $\mathbb{N}$ and the remaining part should be $S$. To do this we need to define an iterative process with transfinite recursion to recursively deduct all non-greatest elements of $\mathbb{N}$. As transfinite recursion can go into transfinite steps, it has the ability to deduct infinitely many non-greatest elements. Therefore it is feasible to use transfinite recursion to deduct all non-greatest elements and obtain the particular remaining part. The iterative process can be implemented in this way. Choose two elements out of $\mathbb{N}$, remove the smaller one that clearly does not have the property $P$ and return the bigger one to the remaining part. Repeat this procedure until there are no two elements left in the remaining part that can be further chosen to implement further deduction and this particular remaining part should be $S$. So whether the iterative process can deduct all the elements of $\mathbb{N}$ is a critical question. If it can the remaining part is empty, so the the greatest element does not exist; if it cannot the remaining part is not empty, so the greatest element maybe exists. %Intuitively, we tend to believe that the iterative process cannot. Our intuition is based on the reasoning that the iterative process deducts an element from the remaining part at a time only when the remaining part still contains more than one element so it can never reduce the remaining part to empty. Or equally, considering that at each iterative step which deducts element from $\mathbb{N}$ the iterative process also returns one element to the remaining part, the remaining part should contain at least one element when the step is conducted.
To clarify the fact we make in-depth investigation by translating the question into a well-defined mathematical representation. First let's define a $Min$ function applied to two natural numbers to obtain the smaller one
\begin{equation*}
Min(x,y)=\left\{
\begin{array}
{r@{\quad if\quad}l}
x & x \le y \\y & y < x
\end{array} \right.
\end{equation*}
In set theory, it is obvious that the $Min(x,y)$ function can be implemented as the intersection of natural numbers $x$ and $y$
\begin{equation}
\label{eq4}
Min(x,y)=x\cap y.
\end{equation}
Then according to the axiom of choice \cite{A}, there is a choice function $f$, defined on set $X=P(\mathbb{N})\backslash\{\phi\}$ (where $P(\mathbb{N})$ is the power set of $\mathbb{N}$, and $P(\mathbb{N})\backslash\{\phi\}$ represents the set-theoretic difference of $P(\mathbb{N})$ and $\{\phi\}$), such that
\begin{equation*}
\forall x(x \in X \to f(x)\in x)
\end{equation*}
where symbol $\to$ symbolizes the the relation of material implication. So we have following inductive definition.

\begin{definition}
\label{def1}
For all ordinals $\alpha\in On$, recursively define following transfinite sequences $A_\alpha$, $B_\alpha$ and $a_\alpha$.
\begin{enumerate}
\item $A_\alpha=\{a_\beta|\beta<\alpha\}$.
\item $B_\alpha=\mathbb{N}\backslash A_\alpha$.
\item
$
a_\alpha=\left\{
\begin{array}
{c@{\quad if\quad}l}
Min(f(B_\alpha),f(B_\alpha\backslash\{f(B_\alpha)\})) & Card(B_\alpha) > 1 \\b & Card(B_\alpha) = 1
\\c & Card(B_\alpha) =0
\end{array} \right.
$.

\end{enumerate}
\end{definition}

Where ordinal number $\alpha$ indicates a particular recursion step, $A_\alpha$ is the set of all the elements that have already been deducted from $\mathbb{N}$ before step $\alpha$ is performed, $B_\alpha$ is the particular remaining part of $\mathbb{N}$ exactly before step $\alpha$ is performed ($B_\alpha$ also can be understood as the particular remaining part of $\mathbb{N}$ exactly after all steps before step $\alpha$ have been performed%; here the word `exactly' means just all steps before step $\alpha$ have been performed but the step $\alpha$ and steps after it are not performed; we will omit the world `exactly' in some next part of the paper for simplicity if it does not cause any ambiguity
), $a_\alpha$ is the particular element of $\mathbb{N}$ that is deducted at the current step $\alpha$ if $B_\alpha$ still contains more than one element otherwise it equals $b$ or $c$, $Card(B_\alpha)$ stands for the cardinality of set $B_\alpha$, $b=\{2\}$ and $c=\{2,3\}$ are sets not belong to $\mathbb{N}$.

It is easy to obtain every elements of the transfinite sequences $A_\alpha$, $B_\alpha$ and $a_\alpha$ with definition \ref{def1}. First it is obvious that $A_0=\phi$ (before step 0 is performed nothing is deducted), $B_0=\mathbb{N}$ (before step 0 is performed the remaining part is exact $\mathbb{N}$) and $a_0=Min(f(\mathbb{N}),f(\mathbb{N}\backslash\{f(\mathbb{N})\}))$. Second if we have obtained all $a_\beta$ for $\beta<\alpha$, then we can obtain $A_\alpha$, $B_\alpha$ and $a_\alpha$ with the three clauses of definition \ref{def1} respectively. So, in line with the principle of transfinite recursion, the transfinite sequences $A_\alpha$, $B_\alpha$ and $a_\alpha$ exist.

It is obvious that we can determine whether the iterative process can deduct all the elements of $\mathbb{N}$ by the value of sequence $B_\alpha$. If and only if fore every ordinal step $\alpha$ we have $B_\alpha\ne\phi$, then the iterative process cannot deduct all the elements of $\mathbb{N}$. So the necessary and sufficient condition for the iterative process cannot deduct all the elements of $\mathbb{N}$ is $\forall \alpha(B_\alpha\ne\phi)$. As a result, we have following definitions.

\begin{definition}
\label{def2}
We say the iterative process cannot deduct all the elements of $\mathbb{N}$ by step $\alpha$ if and only if $B_{\alpha+1}\not=\phi$ (the remaining part is still not empty after step $\alpha$ is performed).
\end{definition}
And then we have definition \ref{def3}.
\begin{definition}
\label{def3}
We say the the iterative process cannot deduct all the elements of $\mathbb{N}$ if and only if the iterative process cannot deduct all the elements of $\mathbb{N}$ by every ordinal step, which can be written as the formula $\forall\alpha(B_{\alpha+1}\neq\phi)$.
\end{definition}

Considering $\forall \alpha(B_\alpha\ne\phi)\implies\forall \alpha(B_{\alpha+1}\ne\phi)$ and $\forall\alpha(B_{\alpha+1}\ne\phi)\implies\forall\alpha(B_\alpha\ne\phi)$ (observe $B_{\alpha+1}\subseteq B_\alpha$, the first property of lemma \ref{lemma0}), definition \ref{def3} is obviously reasonable. Based on above terminologies, the expression that the iterative process cannot deduct all the elements of $\mathbb{N}$ before step $\beta$, which restricts its concerning domain to the steps before step $\beta$ while the statement made in \ref{def3} is about the whole domain of all ordinal steps (in other words, the statement in \ref{def3} refers to the iterative process cannot deduct all the elements in the whole domain of all ordinal steps while the above expression refers to the iterative process cannot only in the restricted domain of all steps before $\beta$), should be logically interpreted as the iterative process cannot deduct all the elements of $\mathbb{N}$ by every step before step $\beta$ or formally as $\forall \alpha(\alpha<\beta\to B_{\alpha+1}\neq\phi)$.
%And the opposite expression that the iterative process can deduct all the elements of $\mathbb{N}$ before step $\beta$ is accordingly interpreted as there is a step $\alpha$ before $\beta$ by which the iterative process can deduct all the elements of $\mathbb{N}$ or formally as $\exists \alpha(\alpha<\beta\wedge B_{\alpha+1}=\phi)$.
%And the opposite expression of \ref{def3} that the iterative process can deduct all the elements is identical to $^\neg\forall\alpha(B_{\alpha+1}\neq\phi)$ which then is equivalent to $\exists \alpha(B_{\alpha+1}=\phi)$.
%before step $\alpha$ if and only if there is an ordinal $\beta<\alpha$ such that the iterative process can deduct all the elements of $N$ at step $\beta$. We also say the iterative process can deduct all the elements of $N$ if and only if there is a ordinal $\alpha$ such that $B_\alpha=\phi$. %we say the iterative process has already deducted all the elements of $N$ before step $\alpha$ if $B_\alpha=\phi$ (the remaining part is already empty before step $\alpha$ is performed) and the iterative process cannot deduct all the elements of $N$ at step $\alpha$ if $B_{\alpha+1}\neq\phi$ (the remaining part is still not empty after step $\alpha$ is performed).

With above definitions and interpretations we expect to answer the question whether the iterative process can deduct all the elements of $\mathbb{N}$. But in the following study we meet some difficulties in answering the question. If the answer is yes we will encounter a contradiction; if the answer is no the greatest natural number must exist and will lead to a conflict with the axiom of regularity, another contradiction. The following sections show the dilemma of how to answer the question.

According to the clause 3 of definition \ref{def1}, the recursion steps can be classified into three classes corresponding to the three conditions $Card(B_\alpha)>1$, $Card(B_\alpha)=1$ and $Card(B_\alpha) = 0$. And it is easy to see that only if the step $\alpha$ satisfies the first condition $Card(B_\alpha)>1$ does the iterative process deduct one element from $\mathbb{N}$ at step $\alpha$; otherwise it deducts nothing from $\mathbb{N}$ at step $\alpha$. The second and third conditions are end conditions which mean once the recursion step has meets these conditions the deduction of elements ends and the remaining part keep invariant after that step. The third condition $Card(B_\alpha)=0$ is the empty end condition which implies the iterative process can deduct all the elements of $\mathbb{N}$; the second condition $Card(B_\alpha)=1$ is the non-empty end condition which implies the iterative process cannot deduct all the elements of $\mathbb{N}$. Then we have lemma \ref{lemma0}.
\begin{lemma}
\label{lemma0}
The transfinite sequences have following properties.
\begin{enumerate}
\item \label{p1} $\beta \le \alpha \rightarrow A_\beta \subseteq A_\alpha \wedge B_\alpha \subseteq B_\beta$.
\item \label{p2} $\alpha \ne \beta \wedge Card(B_\alpha)>1 \wedge Card(B_\beta>1)\to B_\alpha \ne B_\beta$.
\item \label{p3} $\exists \gamma (Card(B_\gamma)=1)$.
\end{enumerate}
\end{lemma}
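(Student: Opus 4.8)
My plan is to dispatch the three assertions separately. Parts \ref{p1} and \ref{p2} are direct unwindings of Definition \ref{def1} and cost little; the content is in part \ref{p3}, which I would reach by combining the injectivity packaged in part \ref{p2} with the fact that $P(\mathbb{N})$ is a set, via Replacement, to bound the ordinal stages at which the process is still ``active'', followed by a short cardinality argument.

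For part \ref{p1}: if $\beta\le\alpha$ then $\{\gamma:\gamma<\beta\}\subseteq\{\gamma:\gamma<\alpha\}$, so $A_\beta=\{a_\gamma:\gamma<\beta\}\subseteq\{a_\gamma:\gamma<\alpha\}=A_\alpha$, and taking complements in $\mathbb{N}$ reverses the inclusion, giving $B_\alpha\subseteq B_\beta$; no induction is needed. For part \ref{p2}, take $\beta<\alpha$ (without loss of generality) with $Card(B_\alpha)>1$ and $Card(B_\beta)>1$. The point is that step $\beta$ deletes a genuine natural number: $B_\beta\neq\phi$, so $f(B_\beta)\in B_\beta\subseteq\mathbb{N}$; since $Card(B_\beta)>1$, the set $B_\beta\backslash\{f(B_\beta)\}$ is nonempty, so $f(B_\beta\backslash\{f(B_\beta)\})\in B_\beta\backslash\{f(B_\beta)\}\subseteq B_\beta$; and $x\cap y\in\{x,y\}$ for ordinals $x,y$, hence $a_\beta\in B_\beta$. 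But $a_\beta\in A_{\beta+1}\subseteq A_\alpha$ by part \ref{p1} (as $\beta+1\le\alpha$), so $a_\beta\notin B_\alpha$. Thus $a_\beta$ separates $B_\beta$ from $B_\alpha$, and part \ref{p2} follows.

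For part \ref{p3}: by part \ref{p2} the assignment $\alpha\mapsto B_\alpha$ is injective on the class $D=\{\alpha\in On:Card(B_\alpha)>1\}$, and all its values lie in the set $P(\mathbb{N})$; since a proper class of ordinals cannot be injected into a set, $D$ is a set. By part \ref{p1} the class $D$ is downward closed in $On$ (if $\beta<\alpha\in D$ then $B_\beta\supseteq B_\alpha$, so $Card(B_\beta)>1$), hence $D=\lambda$ for some ordinal $\lambda$; and $0\in D$ because $B_0=\mathbb{N}$, so $\lambda\ge 1$ and $Card(B_\lambda)\le 1$. If $\lambda=\mu+1$ is a successor, then $Card(B_\mu)>1$, step $\mu$ removes the single natural number $a_\mu\in B_\mu$, and $B_\lambda=B_\mu\backslash\{a_\mu\}$ still has an element; together with $Card(B_\lambda)\le 1$ this forces $Card(B_\lambda)=1$, and $\gamma=\lambda$ works.

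The remaining case --- $\lambda$ a limit ordinal --- is where I expect the real difficulty, and where the argument must either be completed or the statement reconsidered. If $\lambda$ is a limit then $Card(B_\lambda)\le 1$; the case $Card(B_\lambda)=1$ is immediate, so the problematic subcase is $B_\lambda=\phi$. Then every $B_\beta$ with $\beta<\lambda$ still has more than one element, so each $a_\beta$ ($\beta<\lambda$) is a natural number and $\{a_\beta:\beta<\lambda\}=\mathbb{N}$; moreover $B_\alpha=\phi$ for every $\alpha\ge\lambda$ as well, since the $A_\alpha$ only increase. So in this subcase no ordinal step satisfies $Card(B_\gamma)=1$, and the only route left is to prove that this subcase cannot occur. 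This is the crux, and it looks genuinely hard: nothing in Definition \ref{def1} prevents the fixed choice function from peeling off the least surviving element at every finite stage --- if $f$ always selects the least element of its argument then $a_n=n$ and $B_\omega=\phi$, so that $Card(B_\alpha)$ drops from $\aleph_0$ straight to $0$ without ever equalling $1$. Hence a complete proof of part \ref{p3} cannot rest on the informal ``one element removed per step'' picture; it must genuinely control the behaviour at limit stages, and absent some further restriction on $f$ it is not clear to me that it can. That is the step on which everything turns.
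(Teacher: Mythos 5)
Your treatment of properties \ref{p1} and \ref{p2} is correct and essentially coincides with the paper's own argument: the same element-chase gives $A_\beta\subseteq A_\alpha$ and $B_\alpha\subseteq B_\beta$, and your observation that $x\cap y\in\{x,y\}$ for ordinals $x,y$ is exactly what puts $a_\beta\in B_\beta$, so that $a_\beta$ separates $B_\beta$ from $B_\alpha$. For property \ref{p3} you also follow the paper's route as far as it legitimately goes: injectivity of $\alpha\mapsto B_\alpha$ on $D=\{\alpha\mid Card(B_\alpha)>1\}$ with values in the set $P(\mathbb{N})$ makes $D$ a set, downward closure (from \ref{p1}) makes it an ordinal $\lambda$, and $Card(B_\lambda)\le 1$; your successor-case analysis is a refinement the paper does not even bother with. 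The divergence is precisely the limit case. The paper rules out $Card(B_\lambda)=0$ by asserting that $B_\lambda=\phi$ would mean ``the process has deducted all elements of $\mathbb{N}$ before step $\lambda$'', whereas $\forall\alpha(\alpha<\lambda\to B_{\alpha+1}\ne\phi)$ means ``the process cannot deduct all elements of $\mathbb{N}$ before step $\lambda$'', and declaring these contradictory. Stripped of the wording, this is the inference from $\forall\alpha<\lambda\,(B_{\alpha+1}\ne\phi)$ to $B_\lambda\ne\phi$, which is exactly the step you refuse to make; it is invalid at a limit ordinal, since $A_\lambda=\bigcup_{\alpha<\lambda}A_{\alpha+1}$ can exhaust $\mathbb{N}$ even though no single $A_{\alpha+1}$ does. (The paper's own Discussion concedes the two formulas are inequivalent in set theory and objects only on informal grounds.)

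So your diagnosis is the correct one, and your counterexample settles the matter in the strongest way: the paper itself later specializes the choice function to $f(x)=\cap x$ in Eq. (\ref{eq5}), and with that $f$ one gets $a_n=n$ for every finite $n$, hence $B_\omega=\phi$ and $Card(B_\gamma)\in\{\aleph_0,0\}$ for all $\gamma$ --- property \ref{p3} is false in the paper's own setting, not merely unproven. (For other choice functions the conclusion can happen to hold, e.g.\ one can steer the process so that $B_\omega=\{5,7\}$ and $Card(B_{\omega+1})=1$, which is exactly why no proof quantifying over an arbitrary $f$ could exist.) The gap is therefore in the paper's proof, not in your write-up, and Theorems \ref{theorem0} and \ref{theorem1}, which rest on property \ref{p3}, inherit the error.
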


\begin{proof}
\begin{enumerate}
\item Notice $\beta \le \alpha$ and $A_\alpha$'s definition. Then for any $x$, we have\\
    $x \in A_\beta \implies \exists \gamma(a_\gamma=x\wedge \gamma<\beta)\implies \exists \gamma(a_\gamma=x\wedge \gamma<\alpha)\implies x\in A_\alpha$\\
    where symbol $\implies$ symbolizes the relation of logical consequence. So we have\\
    $A_\beta \subseteq A_\alpha$ \\
    and\\
    $A_\beta \subseteq A_\alpha \implies \forall x(x\notin A_\alpha \to x\notin A_\beta)\\
    \implies \forall x(x\in \mathbb{N}\wedge x\notin A_\alpha \to x\in \mathbb{N}\wedge x\notin A_\beta)\\
    \implies \forall x(x\in \mathbb{N}\backslash A_\alpha \to x\in \mathbb{N}\backslash A_\beta)\\
    \implies \forall x(x\in B_\alpha \to x\in B_\beta)\\
    \implies B_\alpha \subseteq B_\beta$\\
    So we obtain property \ref{p1}.
\item If $\alpha \ne \beta$, then either $\alpha < \beta$ or $\beta<\alpha$. Let $\alpha < \beta$. Then\\
    $Card(B_\alpha)>1 \implies a_\alpha=Min(f(B_\alpha),f(B_\alpha\backslash\{f(B_\alpha)\}))\implies a_\alpha \in B_\alpha$\\
    Then noticing $A_\alpha$'s definition and $\alpha < \beta\implies \alpha+1 \le \beta\implies B_\beta \subseteq B_{\alpha+1}$, we have\\
    $a_\alpha \in A_{\alpha+1} \implies a_\alpha \notin \mathbb{N}\backslash A_{\alpha+1}\implies a_\alpha \notin B_{\alpha+1} \implies a_\alpha \notin B_\beta$\\
    So considering above two cases: $a_\alpha \in B_\alpha$ and $a_\alpha \notin B_\beta$, we obtain\\
    $B_\alpha\ne B_\beta$\\
    For the same reason it is easy to prove if $\beta<\alpha$ then $B_\beta \ne B_\alpha$, so we have property \ref{p2} which indicates all $B_\alpha$ in the transfinite sequence are non-repeating when they satisfy $Card(B_\alpha)>1$.

\item Let $B=\{B_\alpha|Card(B_\alpha)>1\}$, so all the members of $B$ are subsets of $\mathbb{N}$. Therefore $B$ is a subset of $P(N)$ that implies $B$ is a set.\\
    Then let $A=\{\alpha|Card(B_\alpha)>1\}$. From property \ref{p2} of lemma \ref{lemma0} we know all $B_\alpha$ in the sequence are non-repeating when they satisfy $Card(B_\alpha)>1$, so there is a one-to-one correspondence, $F: A\leftrightarrow B$ (ordinal $\alpha$ corresponds to $B_\alpha$), between $A$ and $B$. So $A$ is a set also, or precisely it is a set of some ordinals. Then for any ordinal numbers $\alpha$ and $\beta$, we have following logical derivation\\
    $\alpha \in A \wedge\beta<\alpha \implies Card(B_\alpha)>1\wedge B_\alpha\subseteq B_\beta \implies Card(B_\beta)>1\implies \beta\in A$\\
    As a result, we obtain\\
    $\forall \alpha \forall \beta(\beta<\alpha \wedge \alpha \in A \to\beta\in A)$\\
    That indicates set $A$ is an initial segment of ordinal, so there is an ordinal number $\lambda$ equals $A$\\
    $A=\lambda=\{\alpha|\alpha<\lambda\}$\\
    Observing the axiom of regularity, we have $\lambda\notin\lambda$ and thus $\lambda\notin A$. That implies ordinal number $\lambda$ must not satisfy set $A$'s condition, so $Card(B_\lambda)\not>1$. Therefore there are only two cases, i.e., $Card(B_\lambda)=0$ or $Card(B_\lambda)=1$. The first case $Card(B_\lambda)=0$, which means the recursion meets the empty end condition, implies the iterative process can deduct all the elements of $\mathbb{N}$. The second case $Card(B_\lambda)=1$, which means the recursion meets the non-empty end condition, implies the iterative process cannot deduct all the elements of $\mathbb{N}$. Let $Card(B_\lambda)=0$. Then\\
    $B_\lambda=\phi$\\
    As a result, we have\\
    $B_{\lambda+1}=\phi$\\
    So after step $\lambda$ is performed the iterative process has already deducted all the elements of $\mathbb{N}$. Observe $Card(B_\lambda)=0$; we know the iterative process deducts nothing from $\mathbb{N}$ at step $\lambda$. So, before step $\lambda$ is performed the iterative process has already deducted all the elements of $\mathbb{N}$ also. Therefore the iterative process can deduct all the elements of $\mathbb{N}$ before step $\lambda$.\\
    %So in the light of definition \ref{def2} the iterative process can deduct all the elements of $\mathbb{N}$ by step $\lambda$. Observe $Card(B_\lambda)\not>1$; we know the iterative process deduct nothing from $\mathbb{N}$ at step $\lambda$. Therefore, the iterative process can deduct all the elements of $\mathbb{N}$ before step $\lambda$.\\
    %So the iterative process can deduct all the elements of $N$ (observe $B_\lambda=\phi\to B_{\lambda+1}=\phi\to\exists \alpha(B_{\alpha+1}=\phi)$) that implies the iterative process can deduct all the elements of $N$ before step $\lambda$ (observe $A$ and $\lambda$'s definitions; then we know only ordinal numbers $\alpha$ less than $\lambda$ satisfy condition $Card(B_\alpha)>1$. So the iterative process deducts $N$'s elements only at steps before $\lambda$ and deduct nothing from $N$ at step $\lambda$ and steps after it. Therefore, semantically, if the iterative process can deduct all the elements of $N$ it must be able to deduct all of them before step $\lambda$).\\
    On the other hand, notice that all the ordinal numbers $\alpha$ less than $\lambda$ are $\lambda$'s members; then for any ordinal $\alpha$ we have\\
    $\alpha<\lambda \implies \alpha\in\lambda \implies \alpha\in A\implies Card(B_\alpha)>1\implies Card(B_\alpha\backslash\{a_\alpha\})>0 \implies Card(B_{\alpha+1})>0\implies B_{\alpha+1}\ne \phi$\\
    So we obtain\\
    $\forall \alpha(\alpha<\lambda \to B_{\alpha+1}\ne \phi)$\\
    that indicates by every step before $\lambda$ the iterative process cannot deduct all the elements of $\mathbb{N}$. So the iterative process cannot deduct all the elements of $\mathbb{N}$ before step $\lambda$ that contradicts the previous conclusion. As a result, to prevent this obvious contradiction, the assumption $Card(B_\lambda)=0$ must be invalid, so $Card(B_\lambda)=1$. So we have obtained a particular ordinal number $\lambda$ satisfies $Card(B_\lambda)=1$.Therefore we obtain property \ref{p3}.
\end{enumerate}
\end{proof}

Notice that every non-empty subset $x$ of $\mathbb{N}$ has its least element. Let the choice function $f(x)$ choose the least element of $x$. So that
\begin{equation}
\label{eq5}
f(x)=\cap x.
\end{equation}
and the equation in the clause 3 of definition \ref{def1} becomes
\begin{equation}
\label{eq6}
a_\alpha=\left\{
\begin{array}
{c@{\quad if\quad}l}
\cap B_\alpha & Card(B_\alpha) > 1 \\b & Card(B_\alpha) = 1
\\c & Card(B_\alpha) =0
\end{array} \right..
\end{equation}
\begin{proof}
If $Card(B_\alpha)>1$, then\\
$
a_\alpha=Min(f(B_\alpha),f(B_\alpha\backslash\{f(B_\alpha)\}))
$\\
Observe Eq. (\ref{eq4}) and (\ref{eq5}). Then we have\\
$
a_\alpha=f(B_\alpha)\cap f(B_\alpha\backslash\{f(B_\alpha)\})\\
=(\cap B_\alpha) \cap (\cap (B_\alpha\backslash\{f(B_\alpha)\}))\\
=(\cap B_\alpha) \cap (\cap (B_\alpha\backslash\{\cap(B_\alpha)\}))\\
=\cap B_\alpha
$\\
So we obtain Eq. (\ref{eq6}). From it we know only under condition $Card(B_\alpha)>1$ does the recursion step generate $a_\alpha$ belongs to $\mathbb{N}$, so if $a_\alpha$ belongs to $\mathbb{N}$ it must be generated by the first case of Eq. (\ref{eq6}). Therefore we have
\begin{equation}
\label{eq7}
a_\alpha\in \mathbb{N} \to a_\alpha=\cap B_\alpha.
\end{equation}
\end{proof}
And the transfinite sequences have the additional property
\begin{equation}
\label{eq8}
\forall x(x\in A_\alpha \cap \mathbb{N} \wedge B_\alpha \neq \phi \to x\le\cap B_\alpha).
\end{equation}
\begin{proof}
Let $B_\alpha\neq \phi$ and $\beta<\alpha$, then from property \ref{p1} of lemma we know both $B_\alpha$ and $B_\beta$ are non-empty sets of natural numbers and $B_\alpha\subseteq B_\beta$. So\\
$\cap B_\beta\le \cap B_\alpha$\\
Above derivation can be expressed as formula (\ref{eq9}) to facilitate following derivation
\begin{equation}
\label{eq9}
B_\alpha\neq \phi \wedge \beta<\alpha\to \cap B_\beta\le \cap B_\alpha.
\end{equation}
Observe formula (\ref{eq7}) and (\ref{eq9}). Then for any $x$, we have\\
$x\in A_\alpha \cap \mathbb{N} \wedge B_\alpha\ne \phi\\
\implies x\in A_\alpha \wedge x\in \mathbb{N} \wedge B_\alpha\ne \phi\\
\implies\exists \beta( \beta<\alpha \wedge a_\beta=x) \wedge x\in \mathbb{N} \wedge B_\alpha\ne \phi\\
\implies\exists \beta( \beta<\alpha \wedge a_\beta=x \wedge x \in \mathbb{N} \wedge B_\alpha\ne \phi)\\
\implies\exists \beta( \beta<\alpha \wedge a_\beta=x \wedge a_\beta \in \mathbb{N} \wedge B_\alpha\ne \phi)\\
\implies\exists \beta( \beta<\alpha \wedge a_\beta=x \wedge a_\beta = \cap B_\beta \wedge B_\alpha\ne \phi)\\
\implies\exists \beta( a_\beta=x \wedge a_\beta = \cap B_\beta \wedge B_\alpha\ne \phi \wedge\beta<\alpha)\\
\implies \exists \beta( a_\beta=x \wedge a_\beta = \cap B_\beta \wedge \cap B_\beta\le \cap B_\alpha)\\
\implies \exists \beta( a_\beta=x \wedge a_\beta \le \cap B_\alpha)\\
\implies x \le \cap B_\alpha$\\
Therefore, we obtain formula \ref{eq8}.
\end{proof}
As a result we have theorem \ref{theorem0}.
\begin{theorem}
\label{theorem0}
The greatest element of $\mathbb{N}$ exists.
\end{theorem}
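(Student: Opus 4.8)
The plan is to extract a concrete witness from property \ref{p3} of Lemma \ref{lemma0} and to verify that it has the property $P$ of being greater than or equal to every element of $\mathbb{N}$. By \ref{p3} there is an ordinal $\gamma$ with $Card(B_\gamma)=1$. Since $B_\gamma=\mathbb{N}\backslash A_\gamma$ is a singleton subset of $\mathbb{N}$, I would write $B_\gamma=\{m\}$ with $m\in\mathbb{N}$, and then observe $\cap B_\gamma=m$. The claim will be that this $m$ is the greatest element of $\mathbb{N}$, i.e. $\forall y\in\mathbb{N}(m\ge y)$, which immediately yields the theorem.

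To establish the claim I would fix an arbitrary $y\in\mathbb{N}$ and split on whether $y\in A_\gamma$. If $y\notin A_\gamma$, then $y\in\mathbb{N}\backslash A_\gamma=B_\gamma=\{m\}$, so $y=m$ and hence $m\ge y$. If instead $y\in A_\gamma$, then $y\in A_\gamma\cap\mathbb{N}$; moreover $B_\gamma\neq\phi$ because $Card(B_\gamma)=1$, so formula (\ref{eq8}) applies and gives $y\le\cap B_\gamma=m$, i.e. $m\ge y$ again. As $y$ is arbitrary, $m$ satisfies $P(m)$, so the greatest element of $\mathbb{N}$ exists (and incidentally the set $S$ of (\ref{eq3}) is non-empty, since $m\in S$).

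Most of the real work is already behind us: the self-referential regularity argument inside the proof of \ref{p3} is what forces the recursion to terminate at the non-empty end condition rather than the empty one, and (\ref{eq8}) records that every natural number already deducted from $\mathbb{N}$ is dominated by the current minimum of the remaining part. Given these, the present statement is essentially a bookkeeping step. The one point that will need a moment's care is the case analysis on $y$: because $A_\gamma$ may contain the auxiliary sets $b=\{2\}$ and $c=\{2,3\}$, which are not natural numbers, the dichotomy must be phrased as ``$y\in A_\gamma$ versus $y\in B_\gamma$'' via $B_\gamma=\mathbb{N}\backslash A_\gamma$, rather than argued directly from the raw structure of $A_\gamma$; once this is done, intersecting with $\mathbb{N}$ in the first case makes (\ref{eq8}) directly applicable. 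I do not anticipate any genuine obstacle beyond this.
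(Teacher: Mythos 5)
Your proposal is correct relative to the paper's Lemma \ref{lemma0} and formula (\ref{eq8}) and follows essentially the same route as the paper's own proof: take the singleton $B_\gamma$ from property \ref{p3}, bound the already-deducted natural numbers by $\cap B_\gamma$ via (\ref{eq8}), and handle the remaining element trivially. Your case split on $y\in A_\gamma$ versus $y\in B_\gamma$ is just a pointwise rephrasing of the paper's decomposition $\mathbb{N}=(A_\gamma\cap\mathbb{N})\cup B_\gamma$, so there is no substantive difference.
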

\begin{proof}
From the property \ref{p3} of lemma \ref{lemma0} we know there is an ordinal number $\gamma$ such that $B_\gamma$ contains only one element $z$\\
$B_\gamma=\{z\}$\\
Considering definition \ref{def1}, we have\\
$B_\gamma=\mathbb{N}\backslash A_\gamma \implies B_\gamma \subseteq \mathbb{N} \implies z \in \mathbb{N}$\\
and\\
$B_\gamma=B_\gamma \cap \mathbb{N}=\mathbb{N}\backslash (A_\gamma\cap \mathbb{N})\to B_\gamma \cup (A_\gamma\cap \mathbb{N})=\mathbb{N}$\\
Notice $B_\gamma=\{z\}\neq \phi$ and formula \ref{eq8}. Then we have\\
$\forall x(x\in A_\gamma \cap \mathbb{N} \wedge B_\gamma\neq \phi\to x\le \cap B_\gamma)\\
\implies \forall x(x\in A_\gamma \cap \mathbb{N} \to x\le z)\\
\implies \forall x(x\in A_\gamma \cap \mathbb{N} \to x\le z) \wedge \forall x(x\in B_\gamma \to x \le z)\\
\implies \forall x(x\in A_\gamma \cap \mathbb{N} \vee x\in B_\gamma\to x\le z) \\
\implies \forall x(x\in (A_\gamma \cap \mathbb{N}) \cup B_\gamma\to x\le z) \\
\implies \forall x(x\in \mathbb{N} \to x \le z)$\\
So $z$ is greater than or equal to all the elements of $\mathbb{N}$. Noticing $z \in \mathbb{N}$, $z$ is the greatest element of $\mathbb{N}$. Therefore, the set $S$ defined in Eq. (\ref{eq3}) is not an empty set and equals $\{z\}$.
\end{proof}
Then we obtain theorem \ref{theorem1}.
\begin{theorem}
\label{theorem1}
$\mathbb{N}$ is an element of itself.
\end{theorem}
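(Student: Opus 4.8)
The plan is to use Theorem~\ref{theorem0} to pin down the greatest element $z$ of $\mathbb{N}$ and then show that $\mathbb{N}$ coincides with the successor $z^+$; once that identification is in hand, membership $\mathbb{N}\in\mathbb{N}$ drops out of the inductive clause of Definition~\ref{def0}. So first I would invoke Theorem~\ref{theorem0} to obtain a $z\in\mathbb{N}$ with $\forall x(x\in\mathbb{N}\to x\le z)$. Recalling the convention that for ordinals $x\le z$ abbreviates $x\in z\vee x=z$, this says precisely that $\forall x(x\in\mathbb{N}\to x\in z\cup\{z\})$, i.e. $\mathbb{N}\subseteq z^+$.

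Next I would establish the reverse inclusion $z^+\subseteq\mathbb{N}$ in two pieces. Since $z\in\mathbb{N}$ already, it remains to see $z\subseteq\mathbb{N}$, i.e. that every element of the natural number $z$ is itself a natural number. This is the standard transitivity of the natural numbers (each is a von Neumann ordinal, all of whose members are again natural numbers), and I would take it as known, or prove it by an easy induction on Definition~\ref{def0}. Combining $z\subseteq\mathbb{N}$ with $z\in\mathbb{N}$ gives $z^+=z\cup\{z\}\subseteq\mathbb{N}$, and together with the previous paragraph this yields the equality $\mathbb{N}=z^+$.

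Finally I would apply clause~\ref{item2} of Definition~\ref{def0}: as $z$ is a natural number, so is its successor $z^+$, whence $z^+\in\mathbb{N}$. Substituting the identification $\mathbb{N}=z^+$ then gives $\mathbb{N}\in\mathbb{N}$, which is the assertion of the theorem.

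The hard part will not be any single computation but the bookkeeping between the two readings of ``$z$ is greatest'' — the order form $x\le z$ and the set form $x\in z\vee x=z$ — and making sure the inclusion $z\subseteq\mathbb{N}$ is genuinely available and not smuggled in circularly. Once $\mathbb{N}=z^+$ is secured the conclusion is immediate, and it is exactly this conclusion that will be set against the axiom of regularity in what follows.
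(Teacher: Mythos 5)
Your proposal is correct given the paper's earlier results, but it takes a genuinely different route to the membership $\mathbb{N}\in\mathbb{N}$. The paper first proves $\cup\mathbb{N}=\mathbb{N}$ (Eq.~(\ref{eq10})), then establishes the two inclusions $z\subseteq\mathbb{N}$ and $\mathbb{N}\subseteq z$ — the latter by arguing that every $x\in\mathbb{N}$ lies in some $y\in\mathbb{N}$ with $y\le z$, hence $y\subseteq z$, hence $x\in z$ — and concludes $z=\mathbb{N}$, so that $\mathbb{N}\in\mathbb{N}$ follows directly from $z\in\mathbb{N}$. You instead sandwich $\mathbb{N}$ between $z$ and its successor: the order form of Theorem~\ref{theorem0} gives $\mathbb{N}\subseteq z\cup\{z\}=z^+$ via the dictionary $x\le z\leftrightarrow x\in z\vee x=z$, while transitivity of $\mathbb{N}$ plus $z\in\mathbb{N}$ gives $z^+\subseteq\mathbb{N}$; the identification $\mathbb{N}=z^+$ then yields $\mathbb{N}\in\mathbb{N}$ from clause~\ref{item2} of Definition~\ref{def0} (closure of $\mathbb{N}$ under successor). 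Your version is more economical: it never needs $\cup\mathbb{N}=\mathbb{N}$ nor the step "every natural number is a member of some natural number $\le z$," only the $\le$/$\in$ dictionary, transitivity of $\mathbb{N}$, and inductiveness of $\mathbb{N}$. What the paper's longer route buys is the sharper identification $z=\mathbb{N}$ itself (the "greatest natural number" is $\mathbb{N}$), which it leans on rhetorically afterwards; note that your conclusion $\mathbb{N}=z^+$ and the paper's $\mathbb{N}=z$ taken together would force $z=z^+$, i.e.\ $z\in z$, which is just another face of the same regularity violation both arguments are designed to expose. Your caution about not smuggling in $z\subseteq\mathbb{N}$ circularly is well placed but unproblematic here, since transitivity of $\mathbb{N}$ is cited by the paper itself and is independent of Theorems~\ref{theorem0} and~\ref{theorem1}.
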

\begin{proof}
From the definition of $\mathbb{N}$ we know\\
$x\in \mathbb{N} \implies x^+\in \mathbb{N} \wedge x\in x^+ \implies x\in \cup \mathbb{N}$\\
so\\
$\forall x(x\in \mathbb{N}\to x\in\cup \mathbb{N})$\\
therefore\\
$\mathbb{N}\subseteq \cup \mathbb{N}$\\
As set $\mathbb{N}$ is transitive \cite{A}, we also have\\
$x\in \cup \mathbb{N} \implies \exists y(y\in \mathbb{N}\wedge x\in y)\implies x\in \mathbb{N}$\\
so\\
$\forall x(x\in \cup \mathbb{N} \to x\in \mathbb{N})$\\
therefore\\
$\cup \mathbb{N}\subseteq \mathbb{N}$\\
Considering above two cases we obtain
\begin{equation}
\label{eq10}
\cup \mathbb{N}=\mathbb{N}.
\end{equation}
Considering theorem \ref{theorem0} $z$ is the greatest element of $\mathbb{N}$ and Eq. (\ref{eq10}), we have\\
$x \in z \implies x \in z\wedge z\in \mathbb{N}\implies x \in \cup \mathbb{N} \implies x \in \mathbb{N}$\\
so\\
$\forall x(x \in z \to x \in \mathbb{N})$\\
and\\
$z\subseteq \mathbb{N}$\\
On the other hand\\
$x\in \mathbb{N}\implies x\in \cup \mathbb{N}\implies \exists y(y\in \mathbb{N} \wedge x\in y\wedge y\le z)\\
\implies \exists y(y\in \mathbb{N} \wedge x\in y\wedge y\subseteq z)\implies x\in z$\\
so\\
$\forall x(x\in \mathbb{N}\to x \in z)$\\
and\\
$\mathbb{N} \subseteq z$\\
Considering above two cases we obtain\\
$z=\mathbb{N}$\\
and
\begin{equation}
\label{eq11}
\mathbb{N}\in \mathbb{N}.
\end{equation}
\end{proof}

However, the conclusion of formula (\ref{eq11}) that $\mathbb{N}$ is the greatest element of itself not only conflicts with the common sense that there is no greatest natural number, but more severely it contradicts the axiom of regularity which asserts a set cannot be a member of itself \cite{A}. And the latter is a serious conflict, because it leads to conflict between the two axioms of set theory.

%There is also another form of conflict about the set of natural numbers which is identical to above conflict. That is if we regard $\mathbb{N}$ is a set then its union $\cup \mathbb{N}$ is also a set. So $\Omega=\{\cup \mathbb{N}\}$ is a legitimate set. Observe Eq. (\ref{eq10}), so
%\begin{equation}
%\label{eq12}
%\Omega=\{\mathbb{N}\}.
%\end{equation}
%Notice $\cup \mathbb{N}=0\cup 1\cup 2\cup\cdots=\bigcup\limits_{x\in \mathbb{N}} x$, so we also have
%\begin{equation}
%\label{eq13}
%\Omega=\{0\cup 1\cup 2\cup\cdots\}.
%\end{equation}
%or\\
%$$\Omega=\{\bigcup _{x\in \mathbb{N}} x\}$$\\
%Obviously the right side of Eq. (\ref{eq13}) is isomorphic to the right side of Eq. (\ref{eq1}) and the only difference is that the symbols of union in Eq. (\ref{eq13}) are replaced with commas in Eq. (\ref{eq1}). So the iterative deduction of natural numbers is also applicable to Eq. (\ref{eq13}). This time we iteratively chose the first two natural numbers $x$ and $x+1$ of the remaining part of the right side of Eq. (\ref{eq13}) if it still contains more than one number, and then substitute equation $x\cup(x+1)=x+1$ to it to eliminate number $x$ from it. So the similar question arises. Whether the iterative process can eliminate all the numbers in the right side of Eq. (\ref{eq13}) before step $\lambda$? If it can, we obtain $\Omega=\{\}$ which conflicts with Eq. (\ref{eq12}). If it cannot there must be a number $z$ left at step $\lambda$ which turns out to be the greatest element of $\mathbb{N}$ and equals $\mathbb{N}$. That is the exact result of theorem \ref{theorem0} and \ref{theorem1}.

\section{Discussion}
The most important part of this paper is the proof of the property \ref{p3} of lemma \ref{lemma0} with which some people may disagree. The most common argument against it is based on the theory of limit ordinals. They argue that from the formula $\forall \alpha(\alpha<\lambda \to B_{\alpha+1}\neq\phi)$ we cannot deduce $B_\lambda \neq \phi$ when ordinal $\lambda$ is a limit ordinal. They maintain, on the contrary, actually $B_\lambda=\phi$ and $\lambda=\omega$ where $\omega$ is the first transfinite limit number.

However, there is some difficulty in this argument neglected by the defenders of set theory. That is if $\lambda$ is a limit ordinal the proposition that the iterative process cannot deduct all the elements of $\mathbb{N}$ before step $\lambda$ is ambiguous in the logic system of set theory. On the one hand, based on definition \ref{def3} we know the expression that the iterative process cannot deduct all the elements of $\mathbb{N}$ refers to that the iterative process cannot deduct all the elements $\mathbb{N}$ by every ordinal step. As a result, the proposition that the iterative process cannot deduct all the elements of $\mathbb{N}$ before step $\lambda$, a restriction version of above expression with a restriction to limit its concerning domain to the steps only before step $\lambda$ (it does not care about whether the iterative process can deduct all the elements by or after step $\lambda$ for it does not mention this in the statement, and it just care about the results of the steps before $\lambda$, so whether or not the proposition is true is only decided by the results of the steps before $\lambda$), should, as its literal meaning, be interpreted as the iterative process cannot deduct all the elements of $\mathbb{N}$ by every step before $\lambda$. Then the interpretation can be written as $\forall \alpha(\alpha<\lambda \to B_{\alpha+1}\neq\phi)$. This interpretation expresses the original meaning of the proposition. On the other hand, according to the explanation of definition \ref{def1} we know $B_\lambda $ is the particular remaining part of $\mathbb{N}$ exactly before step $\lambda$ is performed. So the proposition is also logically interpreted as $B_\lambda\ne\phi$ (the remaining part is still not empty before step $\lambda$ is performed). The second interpretation expresses the extended meaning of the proposition based on, $B_\lambda$, the critical result of exactly before step $\lambda$. As both interpretations are correct in semantics they should be equivalent in logic. And for most instances, i.e. if $\lambda$ is a successor ordinal the two forms of interpretation are equal indeed so they do not cause any logic problem. But if $\lambda$ is a limit ordinal,
%But on the other hand, based on the terminologies of definitions \ref{def2} and \ref{def3} the proposition is also logically interpreted as $\exists \alpha(\alpha<\lambda \wedge B_{\alpha+1}=\phi)$ (see the paragraph immediately after definition \ref{def3}).
the two formulas, $\forall \alpha(\alpha<\lambda \to B_{\alpha+1}\neq\phi)$ and $B_\lambda\neq\phi$, are not equal within the framework of set theory, so the proposition is ambiguous in the logic system which contains set theory. And this ambiguity will jeopardize the rigorousness and consistency of the logical system and will lead to contradiction when considering the question of whether the iterative process can deduct all the elements of $\mathbb{N}$ before step $\lambda$ with the assumption $B_\lambda=\phi$. Therefore even $\lambda$ is a limit ordinal does not solve all the problems.

Further analysis at the level of general logic reveals that the notion of transfinite limit ordinal is the source of all difficulties. In set theory formula $\forall x\in C(F(x))$ means every element of class $C$ has the property $F$. So we can check some elements of $C$, and if we have already checked that all the elements of $C$ have the property $F$ we will be fully convinced that the formula is true. If $C$ is a finite class the verification of the formula must be conducted in this complete way since having all the elements of $C$ checked is possible. But if $C$ is not a finite class, the situation is quite different. In this case we are not sure whether having all the elements of $C$ checked is possible in principle for it involves whether an infinite process of checking can complete (from the point of view of potential infinity, an infinite process cannot complete, but in the light of set theory some infinite processes do can complete which is analyzed in the following section of this paragraph). So the safe scheme to this situation should be that if having all the elements of $C$ checked is possible we should check all the elements before coming to the conclusion that the formula is true and if it is no possible we need not complete the check of all elements and we just need check every single element to ensure it has the property $F$ (obviously, under such circumstance, the check process cannot complete otherwise having all the elements checked is possible). So having every single element checked is possible does not mean having all elements checked is possible, and we call the former a weak check and the latter a strong check. A weak check just means a infinite process of checking but does not guarantee the process can complete. If a weak check cannot complete indeed the strong check is impossible and also does not exist. If a weak check can complete it turns out to be a strong check. With these notions we come to an important conclusion that if class $C$ is the ordinal number $\lambda$, the formula $\forall \alpha\in \lambda(F(\alpha))$ must be verified with a strong check. Heuristically, an iterative process can be used to check whether every element $\alpha$ of $\lambda$ has the property $F(\alpha)$, i.e. at step 0 it checks $F(0)$, at step 1 it checks $F(1)$ and so on. So exactly before step $\lambda$ it has already checked all the elements in $\lambda$ that indicates the the check process has completed before step $\lambda$. As a result, the infinite check process does can complete if $C$ is an ordinal. Another important and interesting conclusion is that if $C$ is a proper class the formula $\forall \alpha\in C(F(\alpha))$ must be checked with a weak check. In that case $C$ is too big to have any definite cardinality so it is not possible to complete the check of all elements of $C$ within any ordinal step (within any ordinal step we just can check a portion of $C$ with a definite cardinality, in set theory the checked portion is called a subset of $C$, but we never can obtain a checked portion that exactly covers the whole domain of $C$ which is too big to be covered by any static completed entity, i.e. any set). Therefore, for a proper class, a strong check is impossible and the infinite check process is incompletable in essence. If $C$ is the proper class $On$, we usually use transfinite induction to prove or check formula $\forall \alpha\in On(F(\alpha))$. That is if $F(0)$ holds and $\forall \beta<\alpha(F(\beta))\to F(\alpha)$ holds then $\forall \alpha\in On(F(\alpha))$ holds. Under such checking scheme, it is obvious that there is no ordinal number in $On$ which is not checked by the transfinite induction. But from this we should not go too far to infer that the transfinite induction has already checked all the ordinal numbers in $On$ that contradicts above second conclusion that the infinite check process of a proper class is incompletable. So we are in a nuanced situation which suggests the law of excluded middle is not applicable to proper class and the two facts (having all ordinal numbers in $On$ checked is impossible and there is no ordinal number in $On$ that is not checked yet) must both hold. This argument also explains some intuitionists' concern about the abuse of the law of excluded middle to infinite set. To be exact, here it should be proper class rather than infinite set. And the secret to the question lies in the fact that a proper class is not a completed entity so it is not possible to complete the check of all its elements although the check of any element in it is completable.

As $\lambda$ is an ordinal number, it is the exact set of all the steps before $\lambda$. In the light of the implementation of an iterative checking process, every step $\alpha$ in $\lambda$ can be checked, and exactly checking step $\alpha$ means step $\alpha$ and all steps before it are checked but steps after step $\alpha$ are not checked yet. If $\lambda$ is a transfinite limit ordinal, it has the property $\forall \alpha\in \lambda(\alpha+1 \in \lambda)$ which means every step $\alpha$ in $\lambda$ is not the end step in $\lambda$. As a result, the infinite check process of all the steps in $\lambda$ also does not have an end step. And the essence of limit ordinal theory lies in it tries to make us believe that a check process without an end step can end. Integrating with the implementation of an iterative checking process, the formula can be given further interpretation. It means for every step $\alpha$ in $\lambda$ when it has been checked the event that all and only all steps in $\lambda$ are checked does not happen for step $\alpha+1$ is in $\lambda$ and not checked yet. So the critical event does not happen at any step before step $\lambda$. But, on the other hand, with the checking regulation we know at step $\lambda$ it begins to check step $\lambda$ that implies all steps in $\lambda$ are already checked before step $\lambda$ is checked, so the critical event does happen before step $\lambda$. Therefore, in the context of set theory, we have to draw a peculiar conclusion that although the critical event does not happen at any step before step $\lambda$ it does happen before step $\lambda$. And we are extremely curious about why mathematicians can not find any logical flaw in the conclusion. If the conclusion holds there must be a mystery state which is exactly before step $\lambda$ but after all steps in $\lambda$ and the critical event, therefore, does happen at such mystery state. Unfortunately in the context of set theory there is no such state for the limit ordinal number $\lambda$ does not have an immediate predecessor. So the notion of transfinite limit ordinal is illogical and it is not a proper solution to the problem.

Form above discussion and the derivation of formula (\ref{eq11}) we know that if we insist on $\mathbb{N}$ is a set we must be confronted with either a contradiction or a conflict with the axiom of regularity. Both of them are deadly to set theory.

And here we cannot solve the problem by sacrificing the axiom of regularity. If we do so, Eq. (\ref{eq1}) should be revised as following completed form to satisfy formula (\ref{eq11}) regardless of the violation of regularity
\begin{equation}
\mathbb{N}=\{0,1,2,\cdots,\mathbb{N}\}.
\end{equation}
This form of definition of $\mathbb{N}$, however, is impredicative \cite{B} and contains a vicious circle \cite{C}, from which we even cannot determine the exact value of $\mathbb{N}$ since $\mathbb{N}$ appears in both sides of the definition. And what is more, without regularity we even cannot prevent Mirimanoff's paradox \cite{D}. Therefore this scheme is totally unacceptable, and the axiom of infinity should be excluded from set theory to keep the theory consistent.

Since the class of all natural numbers defined by the comprehension principle in Eq. (\ref{eq2}) cannot be a set, in the light of NBG set theory \cite{E}, it should be a proper class. The essence of $\mathbb{N}$ is its incompleteness and non-substantiality. In other words $\mathbb{N}$ is too large to be any completed entity, and it just can be a dynamic class which is always under construction. Weyl had obviously seen the difference between completed entity and dynamic class, and deemed that blindly converting one into the other is the true source of our difficulties and antinomies, a source more fundamental than Russell's vicious circle principle indicated \cite{F}. Our work has made it clear that the dynamic class $\mathbb{N}$ cannot be a set for its incompleteness, and also discloses the essential difference between set and proper class that is obscure in set theory.

If we do not regard $\mathbb{N}$ as a set but a proper class all the difficulties we encounter in this paper will be resolved. That is if $\mathbb{N}$ is not a set we cannot prove $B$ and $A$ are sets. So there is no the conclusion that $A$ is an initial segment of ordinal and also there is no the ordinal number $\lambda$. Consequently we cannot obtain the property \ref{p3} of lemma \ref{lemma0}; as a result, the proofs of theorem \ref{theorem0} and \ref{theorem1} are groundless. And the first contradiction is also dismissed in the absence of the ordinal number $\lambda$.

\section{Conclusion}
The difficulties reveal that the axiom of infinity which guarantees the existence of the set of natural numbers causes either a contradiction or a conflict with the axiom of regularity, and the essence of the contradiction lies in the inductive definition of set $\mathbb{N}$. When we define the inductive collection $\{0, 1, 2, \cdots\}$ produced by the inductive add-one process is an infinite set $\mathbb{N}$, we have already regarded it as a completed, static entity. But on the other hand, with regularity and the induction principle, the inductive construction of natural numbers still can step into the next step wherever it attains and produces a new natural number. So the completed state of the inductive construction does not exist that implies the infinite set $\mathbb{N}$ also does not have a completed form. How can an already existing entity possess the attribute that it does not have a completed form at the same time? This is the insidious logical fallacy deeply hiding behind the axiom of infinity.

In our point of view the inductive definition of natural numbers just could guarantee the existence of an infinite process, but it should not become the sufficient condition for that the infinite process can be finally done and thus produce an infinite static totality, i.e., an infinite set. That is the misapprehension of infinity in the notion of actual infinity.

Since we have proved that the class of all natural numbers cannot be a set, the assertion made in the axiom of infinity that there is an inductive set is improper.

\emph{Acknowledgments}. The author is grateful to Kyrill for carefully reading the manuscript and making useful suggestions.

\end{document}